\numberwithin{equation}{section}
\newtheorem{thm}{Theorem}[section]
\newtheorem{prop}[thm]{Proposition}
\theoremstyle{definition}
\newtheorem{defn}[thm]{Definition}
\theoremstyle{remark}
\newtheorem{rem}[thm]{Remark}
\numberwithin{equation}{subsection}
\newtheorem{ex}[thm]{Example}
\newcommand{\R}{\mathbb{R}}
\newcommand{\K}{\mathbb{K}}
\newcommand{\Z}{\mathbb{Z}}
\newcommand{\E}{\mathbb{E}}
\newcommand{\p}{\varphi}
\newcommand{\im}{{\mathrm{im}}}
\newcommand{\Homeo}{\operatorname{Homeo}}
\newcommand{\interior}{\operatorname{int}}
\begin{document}

\begin{frontmatter}

\title{$G$-invariant persistent homology}

\author{Patrizio Frosini, Department of Mathematics and ARCES, University of Bologna}
\ead{patrizio.frosini@unibo.it}
\address{Piazza di Porta San Donato 5, 40126, Bologna, Italy}

\begin{abstract}
Classical persistent homology is a powerful mathematical tool for shape comparison. Unfortunately, it is not tailored to study the action of transformation groups that are
different from the group $\Homeo(X)$ of all
self-homeomorphisms of a topological space $X$. This fact restricts its use in applications.
In order to obtain better lower bounds for the natural pseudo-distance $d_G$ associated with a group $G\subset \Homeo(X)$,
we need to adapt persistent homology and consider $G$-invariant
persistent homology.
Roughly speaking, the main idea consists in defining persistent homology by means of a set of chains that is invariant
under the action of $G$. In this paper we formalize this idea, and prove the stability of the persistent Betti number functions in $G$-invariant
persistent homology with respect to the natural pseudo-distance $d_G$. We also show how $G$-invariant
persistent homology could be used in applications concerning shape comparison, when the invariance group is a proper subgroup of the group of all self-homeomorphisms of a topological space.
In this paper we will assume that the space $X$ is triangulable, in order to guarantee that the persistent Betti number functions are finite without using any tameness assumption. 
\end{abstract}

\begin{keyword}
Natural pseudo-distance \sep filtering function \sep group action \sep  lower bound \sep stability \sep  shape comparison
\MSC[2010]  Primary 55N35 \sep Secondary 68U05
\end{keyword}

\end{frontmatter}

\section{Introduction}
\label{Introduction}
In many applied problems we are interested in comparing two $\R^k$-valued functions defined on a topological space, up to a certain group of tranformations.
As an example, we can think of the case of taking pictures of two objects $A$ and $B$ from every possible oriented direction (at a constant distance) and comparing the sets of images we get. In such a case the image $I(v)$ taken from the oriented direction of a unit vector $v$ can be approximated by a point in $\R^k$. This point describes a matrix $M(v)$, which represents the grey levels on a grid discretizing the image $I(v)$.
Our global measurement is a function $\varphi:S^2\to\R^k$, taking each oriented direction $v\in S^2\subset \R^3$ to the vector $\p(v)$ describing the matrix $M(v)$, associated with the picture $I(v)$ that we get from that oriented direction. In this case the position of the examined objects cannot be predetermined but we can control the direction of the camera that takes the pictures. As a consequence, two different sets of pictures (described by two different functions $\p,\psi:S^2\to\R^k$) can be considered similar if an orientation-preserving rigid motion $g$ of $S^2$ exists, such that the picture of  $A$ taken from the oriented direction of the unit vector $v$ is similar to the picture of  $B$ taken from the oriented direction of the unit vector $g(v)$, for every $v\in S^2$. Formally speaking,  the two different sets of pictures
can be considered similar if $\inf_{g\in R({S^2})}\max_{v\in S^2}\left\|\p(v)-\psi(g(v))\right\|_\infty$  is small, where $R({S^2})$ denotes the group of orientation-preserving isometries of $S^2$ and $\|\cdot\|_\infty$ is the max-norm.

The previous example illustrates the use of the following definition, where $C^0(X,\R^k)$ represents the set of all continuous functions from $X$ to $\R^k$. These functions are called \emph{$k$-dimensional filtering functions} on the topological space $X$.

In this paper we will assume that the space $X$ is triangulable. This assumption allows to guarantee that the persistent Betti number functions (PBNFs) are finite without using any tameness assumption (cf. Theorem 2.3 in \cite{CeDFFe13}). The assumption that the PBNFs are finite is necessary to our treatment. We could weaken the assumption that $X$ is triangulable and consider a compact and locally contractible subspace of $\R^n$ (cf.  \cite{CaLa11}), but we preferred to refer to an assumption that is usual for the community interested in persistent homology. 

\begin{defn}\label{defdG}
Let $X$ be a triangulable space. Let $G$ be a subgroup of the group $\Homeo(X)$ of all homeomorphisms $f:X\to X$. The pseudo-distance $d_G:C^0(X,\R^k)\times C^0(X,\R^k)\to\R$ defined by setting $$d_G(\p,\psi)=\inf_{g \in G}\max_{x \in X}\left\|\p(x)-\psi(g(x))\right\|_\infty$$ is called the \emph{natural pseudo-distance associated with the group $G$}.
\end{defn}

The previous definition generalizes the concept of natural pseudo-distance studied in \cite{FrMu99,DoFr04,DoFr07,DoFr09,Fa11} to the case $G\neq \Homeo(X)$, and is a particular case of the general setting described in \cite{FrLa11}. The case that $G$ is a proper subgroup of $\Homeo(X)$ is also examined in \cite{Ca10,CaDiLa12}, and in \cite{Fr90} for the case of the group of diffeomorphisms (in an infinite dimensional setting).

The pseudo-distance $d_G$ is difficult to compute. Fortunately, if $G=\Homeo(X)$, persistent homology can be used to obtain lower bounds for $d_G$. For example, if we denote by $D_{match}$ the matching distance between the $n$-th persistent Betti number functions $\rho_n^\p$ and $\rho_n^\psi$ of the functions $\p$ and $\psi$, we have that $D_{match}(\rho_n^\p,\rho_n^\psi)\le d_{\Homeo(X)}(\p,\psi)$ (cf. \cite{BiCeFrGiLa08,CeDFFe13}).

\begin{rem}\label{rembd}
In literature concerning persistent homology, the expression \emph{matching distance} (a.k.a. bottleneck distance) usually denotes a metric between persistence diagrams. However, each persistence diagram represents just one persistent Betti number function, provided that two persistent Betti number functions are considered equivalent if they differ in a subset of their domain that has a vanishing measure. As a consequence, the matching distance can be seen as a metric between persistent Betti number functions. In this paper we shall use the expression \emph{matching distance} in this sense.
\end{rem}

For more details about persistent homology and its applications we refer the reader to \cite{CaZo09,CaZo*05,ChCo*09,EdHa08,Gh08}.

A natural question arises: How could we obtain a lower bound for $d_G$ in the general case $G\neq \Homeo(X)$? Does an analogue of the concept of persistent Betti number function exist, suitable for getting a lower bound for $d_G$?
Since $d_{\Homeo(X)}(\p,\psi)\le d_G(\p,\psi)$, one could think of using the classical lower bounds for the natural pseudo-distance $d_{\Homeo(X)}$ in order to get lower bounds for the pseudo-distance $d_G$.
Before proceeding we illustrate two examples, showing that in some cases this choice is not useful.

\begin{ex}\label{ex1}
Let us consider an experimental setting where a robot is in the middle of a room, measuring its distance from the surrounding walls by a sensor, for each oriented direction. This measurement can be formalized by a function
$\xi:S^1 \to \R$, where $\xi(v)$ equals minus the distance from the wall in the oriented direction represented by the unit vector $v$, for each $v\in S^1$. Figure~\ref{rooms} represents two instances $\p$ and $\psi$ of the function $\xi$ for two different shapes of the room. Let $R(S^1)$ denote the group of orientation-preserving rigid motions of $S^1\subset \R^2$.
We observe that a homeomorphism $f:S^1\to S^1$ exists, such that $\p=\psi\circ f$ and $f\notin R(S^1)$.
It follows that $d_{\Homeo(S^1)}(\p,\psi)=0$,
so that the direct application of classical persistent homology does not give a positive lower bound for $d_{R(S^1)}(\p,\psi)$,
while we will see that $d_{R(S^1)}(\p,\psi)>0$.
\end{ex}

\begin{figure}[htbp]
\begin{center}
\includegraphics[width=16cm]{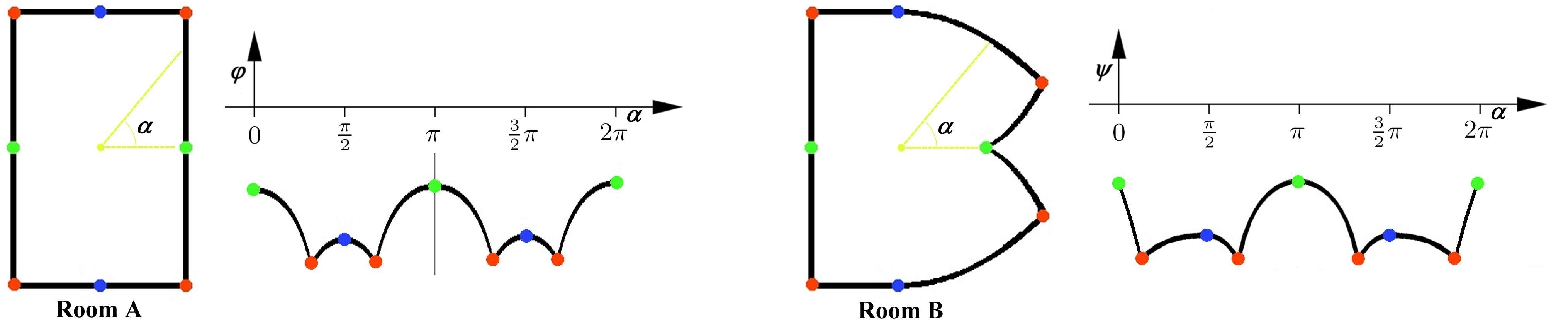}
\caption{Two rooms and the respective functions $\p,\psi$, representing minus the distance between the center and the walls. $S^1$ is identified with the interval $[0,2\pi]$.}
\label{rooms}
\end{center}
\end{figure}

\begin{ex}\label{ex2}
Let us consider the functions $\varphi_\mathtt{A},\varphi_\mathtt{D},\varphi_\mathtt{O},\varphi_\mathtt{P},\varphi_\mathtt{Q},\varphi_\mathtt{R}$ from the unit disk $D^2\subset \R^2$ to the real numbers, representing images of the letters $\mathtt{A}, \mathtt{D}, \mathtt{O}, \mathtt{P}, \mathtt{Q}, \mathtt{R}$.  For each letter $Y\in\{\mathtt{A}, \mathtt{D}, \mathtt{O}, \mathtt{P}, \mathtt{Q}, \mathtt{R}\}$, the function $\varphi_Y:D^2\to\R$ describes the grey level at each point of the topological space $D^2$, with reference to the considered instance of the letter $Y$ (see Figure~\ref{letters}). Black and white correspond to the values $0$ and $1$, respectively (so that light grey corresponds to a value close to $1$). It is easy to recognize that for each pair $(Y,Y')$ with $Y,Y'\in\{\mathtt{A}, \mathtt{D}, \mathtt{O}, \mathtt{P}, \mathtt{Q}, \mathtt{R}\}$ a homeomorphism $h:D^2\to D^2$ exists such that the max-distance between the functions $\varphi_Y,\varphi_{Y'}$ vanishes. This is due to the fact that the letters $\mathtt{A}, \mathtt{D}, \mathtt{O}, \mathtt{P}, \mathtt{Q}, \mathtt{R}$ are homeomorphic to each other.
It follows that $d_{\Homeo(D^2)}(\varphi_Y,\varphi_{Y'})$ vanishes. As a consequence, the distance between the classical persistence diagrams of $\varphi_Y$ and $\varphi_{Y'}$ vanishes, too. This proves that the direct application of classical persistent homology is not of much use in this example.
\end{ex}

\begin{figure}[htbp]
\begin{center}
\includegraphics[width=12cm]{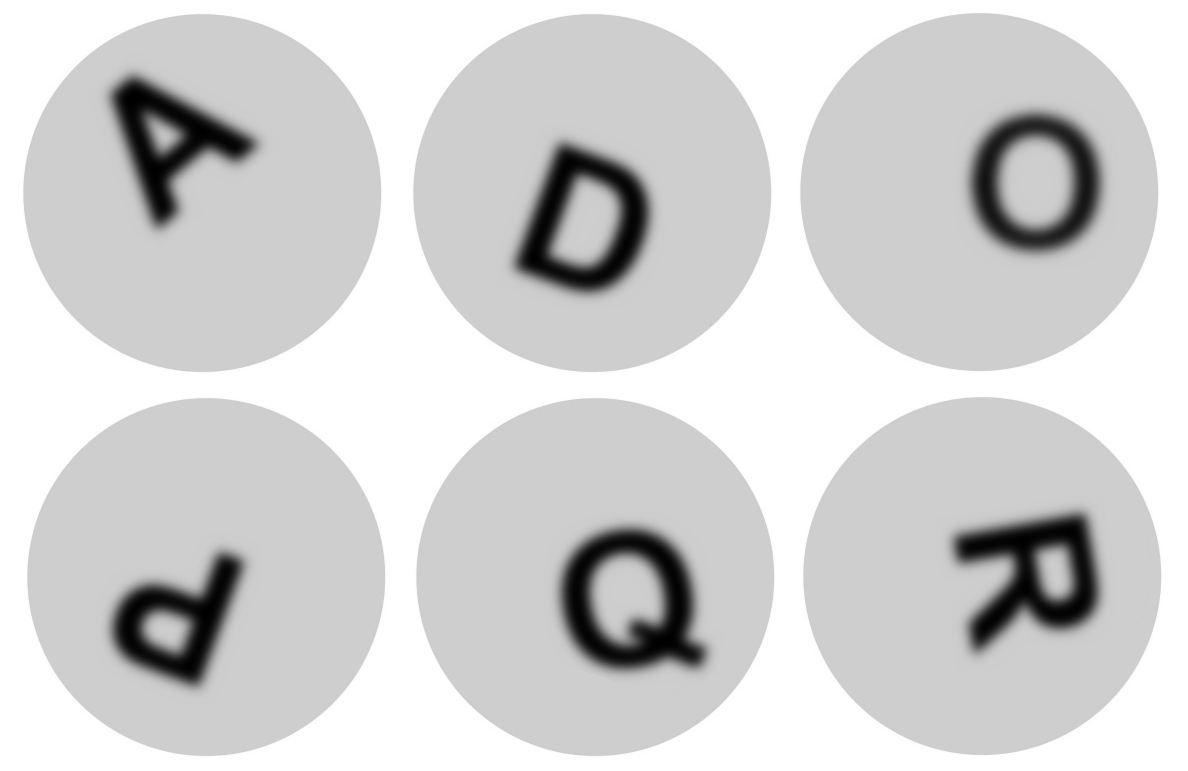}
\caption{Examples of letters $\mathtt{A}, \mathtt{D}, \mathtt{O}, \mathtt{P}, \mathtt{Q}, \mathtt{R}$ represented by functions $\varphi_\mathtt{A},\varphi_\mathtt{D},\varphi_\mathtt{O},\varphi_\mathtt{P},\varphi_\mathtt{Q},\varphi_\mathtt{R}$ from the unit disk $D^2\subset \R^2$ to the real numbers.  Each function $\varphi_Y:D^2\to\R$ describes the grey level at each point of the topological space $D^2$, with reference to the considered instance of the letter $Y$. Black and white correspond to the values $0$ and $1$, respectively (so that light grey corresponds to a value close to $1$).}
\label{letters}
\end{center}
\end{figure}

One could think of solving the problem described in the two previous examples by using other filtering functions. Unfortunately, this is not always easy to do. To make this point clear, think of acquiring data by magnetic resonance imaging (MRI). Asking for further filtering functions means asking for new measurements, of similar or different kind. This approach could be expensive or impractical. Furthermore, choosing the data we have to manage is not allowed, in many applications.

Moreover, in the fortunate case that we can choose the filtering function, another difficulty arises. It consists in the fact that shape comparison is usually based on judgements of experts, expressed by invariance properties. As an example, the expert can say that rotation and scaling are not important in the considered field of research. On one hand, we observe that it is not easy to translate the invariance properties expressed by the expert into the choice of a filtering function. On the other hand, it is quite natural to try to  directly insert the information given by the expert into our theoretical setting.
In this paper we will show that we can do that. Indeed, we can adapt persistent homology in order to obtain the invariance with respect to the action of a given group $G$ of homeomorphisms. This allows us to obtain a theory that can give a positive lower bound for $d_G$, in the previous examples (and in many similar cases, where a direct application of classical persistent homology is not of much use).

We are going to describe this idea in the next section.

\section{Adapting persistent homology to the group $G$}

This section is devoted to the introduction of some abstract definitions and the statement of a general result. In the next sections we will show how these concepts can be put into effect.

Shape comparison is commonly based on comparing properties (usually described by $\R^k$-valued functions) with respect to the action of a transformation group. Let us interpret these concepts in a homological setting. Before proceeding, let us fix a chain complex $(C,\partial)$ over a field $\K$ (so that each group of $n$-chains $C_n$ is a vector space).
We consider the partial order $\preceq$ on $\R^k$ defined by setting
$(u_1,\ldots,u_k)\preceq (v_1,\ldots,v_k)$ if and only if  $u_j\le v_j$ for every $j\in \{1,\ldots,k\}$.

\begin{defn} \label{defFF}
Let $(C,\partial)$ be a chain complex over a field $\K$.
Assume a function $\bar\varphi=(\bar\varphi_1,\ldots,\bar\varphi_k):\bigcup_n C_n\to \R^k\cup (-\infty,\ldots,-\infty)$ is given, such that
\begin{description}
\item[$i)$] $\bar\varphi$ takes the null chain $\mathbf{0}\in C_n$ to the $k$-tuple $(-\infty,\ldots,-\infty)$, for every $n\in \Z$;
\item[$ii)$] $\bar\varphi(\partial c)\preceq \bar\varphi(c)$ for every $c\in \bigcup_n C_n$;
\item[$iii)$] $\bar\varphi(\lambda c)=\bar\varphi(c)$ for every $c\in \bigcup_n C_n$, $\lambda\in\K$, $\lambda\neq 0$;
\item[$iv)$] $\bar\varphi_j(c_1+c_2)\le\max\left(\bar\varphi_j(c_1),\bar\varphi_j(c_2)\right)$ for every
$c_1,c_2\in C_n$ with  $n\in \Z$, and every $j\in \{1,\ldots,k\}$.
\end{description}
We shall say that $\bar\varphi$ is a \emph{filtering function on the chain complex $(C,\partial)$}.
\end{defn}

\begin{defn}\label{defGICC}
Let $(C,\partial)$ be a chain complex over a field $\K$.
Let us assume that a group $G$ is given,
such that $G$ acts linearly on each vector space $C_n$ and its action commutes with $\partial$, i.e., $\partial \circ g=g\circ \partial$ for every $g\in G$ (in particular, every $g\in G$ is a chain isomorphism from $C$ to $C$).
The chain complex $(C,\partial)$ will be said a \emph{$G$-chain complex}. We shall call the group $H_n(C):=\ker \partial_n/ {\rm im}\ \partial_{n+1}$ the \emph{$n$-th homology group associated with the $G$-chain complex $(C,\partial)$}.
\end{defn}

We observe that the group $G$ acts on the kernel and image whose quotient is the group $H_n(C)$. As a consequence, $G$ also acts on the homology group.

In the previous definition we do not specify how the action of $G$ on each vector space $C_n$ is chosen, confining ourselves to assume that this action is linear and commutes with $\partial$. In the next section,
$C$ will be the singular chain complex of a triangulable space $X$ over a field $\K$, and
$G$ will be assumed to be a subgroup of $\Homeo(X)$. In that section, the action of each $g\in G$ on each singular simplex in $X$ will be given by the usual composition of functions. For more details about $G$-complexes and equivariant homology we refer the interested reader to \cite{Br72,Il73,tD87,Wi75}.

Now, let us assume that $(C,\partial)$ is a $G$-chain complex, endowed with a filtering function $\bar\varphi$. For every $u\in \R^k$ we can consider the chain subcomplex $C^{\bar\varphi\preceq u}$ of $C$ defined by setting $C^{\bar\varphi\preceq u}_n:=\{c\in C_n:\bar\varphi(c)\preceq u\}$ and restricting $\partial$ to $C^{\bar\varphi\preceq u}$. $C^{\bar\varphi\preceq u}$ is a subcomplex of $C$ because of the properties in Definition~\ref{defFF} (in particular, $\partial(C^{\bar\varphi\preceq u}_{n+1})\subseteq C^{\bar\varphi\preceq u}_n$). We observe that $C^{\bar\varphi\preceq u}$ will not be a $G$-chain complex, since $g(C^{\bar\varphi\preceq u}_n)\not\subseteq C^{\bar\varphi\preceq u}_n$, in general. For the sake of simplicity, we will use the symbol $\partial$ in place of $\partial_{|C^{\bar\varphi\preceq u}}$.

\begin{defn}\label{defCSCPHI}
The chain complex $\left(C^{\bar\varphi\preceq u},\partial\right)$
will be called the \emph{chain subcomplex of $(C,\partial)$ associated with the value $u\in \R^k$, with respect to the filtering function $\bar\varphi$}.
\end{defn}

We refer to  \cite{KaMiMr04} for the definition of chain subcomplex.

Now we can define the concept of the $n$-th persistent homology group of $(C,\partial)$, with respect to $\bar\varphi$.

\begin{defn}\label{defPHG}
If $u=(u_1,\ldots,u_k), v=(v_1,\ldots,v_k) \in \R^k$ and $u\prec v$ (i.e., $u_j< v_j$ for every index $j$), we can consider the inclusion $i$ of the chain complex
$C^{\bar\p\preceq u}$ into the chain complex $C^{\bar\p\preceq v}$. Such an inclusion
 induces a homomorphism
 $i_*: H_n\left(C^{\bar\p\preceq u}\right) \to H_n\left(C^{\bar\p\preceq v}\right)$.
 We shall call the group $PH^{\bar\p}_n(u,v):=i_*\left(H_n\left(C^{\bar\p\preceq u}\right)\right)$ the \emph{$n$-th persistent homology
 group of  the $G$-chain complex $C$, computed at the point $(u,v)$ with respect to the filtering function $\bar\varphi$}.
 The rank $\rho_n^{\bar\p}(u,v)$ of this group will be called \emph{the $n$-th persistent Betti
 number function (PBNF) of  the $G$-chain complex $C$, computed at the point $(u,v)$ with respect to the filtering function $\bar\varphi$}.
\end{defn}

The key property of $PH^{\bar\p}_n$ is the invariance expressed by the following result.

\begin{thm}\label{propinv}
If $g$
is a chain isomorphism from $C$ to $C$
and $u,v\in \R^k$ with $u\prec v$, the groups $PH^{\bar\p\circ g}_n(u,v)$ and $PH^{\bar\p}_n(u,v)$ are isomorphic.
\end{thm}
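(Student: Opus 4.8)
The plan is to use the chain isomorphism $g$ to identify the two filtrations up to a level-preserving isomorphism. The starting observation is that, for every $u\in\R^k$ and every $n$,
$$C^{(\bar\p\circ g)\preceq u}_n=\{c\in C_n:\bar\p(g(c))\preceq u\}=g^{-1}\left(C^{\bar\p\preceq u}_n\right),$$
since $\bar\p(g(c))\preceq u$ holds if and only if $g(c)\in C^{\bar\p\preceq u}_n$. Hence $g$ carries each $n$-chain of $C^{(\bar\p\circ g)\preceq u}$ into $C^{\bar\p\preceq u}$, and because $g$ is bijective with $g^{-1}$ performing the reverse assignment, $g$ restricts to a bijection $g^u_n:C^{(\bar\p\circ g)\preceq u}_n\to C^{\bar\p\preceq u}_n$. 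As $g$ is linear and commutes with $\partial$, these restrictions assemble into a chain isomorphism $g^u:C^{(\bar\p\circ g)\preceq u}\to C^{\bar\p\preceq u}$; the same argument yields a chain isomorphism $g^v:C^{(\bar\p\circ g)\preceq v}\to C^{\bar\p\preceq v}$. (A routine check using properties $(i)$--$(iv)$ together with $\partial\circ g=g\circ\partial$ confirms that $\bar\p\circ g$ is itself a filtering function, so that the symbol $PH^{\bar\p\circ g}_n(u,v)$ is legitimate.)

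Next I would compare the two inclusions of Definition~\ref{defPHG} with these isomorphisms. Writing $i':C^{(\bar\p\circ g)\preceq u}\hookrightarrow C^{(\bar\p\circ g)\preceq v}$ and $i:C^{\bar\p\preceq u}\hookrightarrow C^{\bar\p\preceq v}$, the square
$$\begin{CD}
C^{(\bar\p\circ g)\preceq u} @>i'>> C^{(\bar\p\circ g)\preceq v}\\
@Vg^uVV @VVg^vV\\
C^{\bar\p\preceq u} @>i>> C^{\bar\p\preceq v}
\end{CD}$$
commutes at the chain level, since both $g^v\circ i'$ and $i\circ g^u$ send a chain $c$ to $g(c)$. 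Applying the homology functor $H_n$, and using that chain isomorphisms induce isomorphisms on homology, I obtain a commutative square of induced maps in which the vertical arrows $(g^u)_*$ and $(g^v)_*$ are isomorphisms and $(g^v)_*\circ i'_*=i_*\circ(g^u)_*$.

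Finally, I would read off the conclusion. By Definition~\ref{defPHG}, $PH^{\bar\p\circ g}_n(u,v)=\im i'_*$ and $PH^{\bar\p}_n(u,v)=\im i_*$. Since $(g^u)_*$ is an isomorphism, $\im i_*=\im\left(i_*\circ(g^u)_*\right)=\im\left((g^v)_*\circ i'_*\right)=(g^v)_*\left(\im i'_*\right)$; and because $(g^v)_*$ is an isomorphism, it restricts to an isomorphism of $\im i'_*$ onto its image $(g^v)_*\left(\im i'_*\right)$. Combining these gives
$$PH^{\bar\p\circ g}_n(u,v)=\im i'_*\cong(g^v)_*\left(\im i'_*\right)=\im i_*=PH^{\bar\p}_n(u,v),$$
as required. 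I do not expect a serious obstacle: the only delicate point is the very first identity, which hinges on reading $\bar\p(g(c))\preceq u$ as the defining condition for the sublevel set at $g(c)$. Everything afterwards is functoriality of $H_n$ together with the elementary fact that an isomorphism carries the image of a map isomorphically onto the image of the corresponding composite.
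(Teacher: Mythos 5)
Your proof is correct, and the isomorphism it produces is in fact literally the same map as the paper's: the paper defines $F$ on elements by $F([c]_v)=[g(c)]_v$, which is exactly the restriction of your $(g^v)_*$ to $\im\, i'_*$. The routes, however, differ genuinely in technique. The paper never isolates your opening identity $C^{(\bar\p\circ g)\preceq u}_n=g^{-1}\left(C^{\bar\p\preceq u}_n\right)$; it works with individual cycles and bounding chains, and therefore must verify by hand that $F$ is well defined, linear, injective, and surjective --- four separate element-chasing arguments, each re-invoking the chain-isomorphism property of $g$ and property $ii)$ of Definition~\ref{defFF}. You instead package everything into two structural facts: $g$ restricts to chain isomorphisms $g^u$, $g^v$ between the sublevel subcomplexes of the two filtrations, and the square relating these to the inclusions $i'$, $i$ commutes at the chain level; functoriality of $H_n$ then does all the work, with well-definedness, linearity and bijectivity subsumed in the statement that chain isomorphisms induce isomorphisms on homology and that isomorphisms carry images onto images. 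Your approach buys brevity and transparency (and your parenthetical check that $\bar\p\circ g$ is again a filtering function is a point the paper leaves implicit, though it is needed for $C^{(\bar\p\circ g)\preceq u}$ to be a subcomplex at all); the paper's element-level proof buys self-containedness, relying on no general facts about induced maps beyond their definitions. Both are complete proofs of the theorem.
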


\begin{proof}
We define a map $F:PH^{\bar\p\circ g}_n(u,v)\to PH^{\bar\p}_n(u,v)$ in the following way. Let us consider an element $z\in PH^{\bar\p\circ g}_n(u,v):=i_*\left(H_n\left(C^{\bar\p\circ g\preceq u}\right)\right)$. By definition, a cycle $c\in C_n^{\bar\p\circ g\preceq u}$ exists, such that $z$ is the equivalence class $[c]_v$ of $c$ in $H_n\left(C^{\bar\p\circ g\preceq v}\right)$.
We observe that $g(c)\in C_n^{\bar\p\preceq u}$ and the equivalence class $[g(c)]_v$ of $g(c)$ in $H_n\left(C^{\bar\p\preceq v}\right)$ belongs to $PH^{\bar\p}_n(u,v):=i_*\left(H_n\left(C^{\bar\p\preceq u}\right)\right)$.
We set $F(z)=[g(c)]_v$.

If $c'\in C_n^{\bar\p\circ g\preceq u}$ is another cycle such that $z=[c']_v\in H_n\left(C^{\bar\p\circ g\preceq v}\right)$, then a chain $\gamma \in C_{n+1}^{\bar\p\circ g\preceq v}$ exists, such that $c'-c=\partial\gamma$.
We observe that $g(\gamma)\in C_{n+1}^{\bar\p\preceq v}$. The inequality $\bar\varphi(\partial (g(\gamma)))\preceq \bar\varphi(g(\gamma))$ (see Definition~\ref{defFF}) implies that  $\partial (g(\gamma))\in C_{n}^{\bar\p\preceq v}$. As a consequence, $[g(c')]_v=[g(c+\partial\gamma)]_v=[g(c)+g(\partial\gamma)]_v=[g(c)+\partial(g(\gamma))]_v=[g(c)]_v+[\partial(g(\gamma))]_v=[g(c)]_v$.
These equalities follow from the fact that $g$ is a chain isomorphism.
This proves that $F$ is well defined.

Let $z_1=[c_1]_v, z_2=[c_2]_v\in PH^{\bar\p\circ g}_n(u,v)$, with $c_1,c_2\in C_n^{\bar\p\circ g\preceq u}$. We observe that $g(c_1),g(c_2)\in C_n^{\bar\p\preceq u}$. From the linearity of $g$, it follows that $g(\lambda_1 c_1+\lambda_2 c_2)=\lambda_1 g(c_1)+\lambda_2 g(c_2)\in C_n^{\bar\p\preceq u}$, for every $\lambda_1,\lambda_2\in \K$.
Hence, we have that $F(\lambda_1 z_1+\lambda_2 z_2)=F(\lambda_1 [c_1]_v+\lambda_2 [c_2]_v)=F([\lambda_1 c_1+\lambda_2 c_2]_v)=[g(\lambda_1 c_1+\lambda_2 c_2)]_v=\lambda_1 [g(c_1)]_v+\lambda_2 [g(c_2)]_v=\lambda_1 F([c_1]_v)+\lambda_2 F([c_2]_v)=\lambda_1 F(z_1)+\lambda_2 F(z_2)$. Therefore, $F$ is linear.

Furthermore, if $F(z_1)=F(z_2)$ then $[g(c_1)]_v=[g(c_2)]_v$, so that a chain $\hat\gamma \in C_{n+1}^{\bar\p\preceq v}$ exists, such that $g(c_1-c_2)=g(c_1)-g(c_2)=\partial\hat\gamma$. Moreover, $g^{-1}(\hat\gamma)\in C_{n+1}^{\bar\p\circ g\preceq v}$.
It follows that $c_1-c_2=g^{-1}(\partial\hat\gamma)=\partial\left(g^{-1}(\hat\gamma)\right)\in C_{n}^{\bar\p\circ g\preceq v}$, because of Definitions~\ref{defFF} and
the fact that $g$ is a chain isomorphism.
As a consequence, $[c_1]_v=[c_2]_v$. This proves that $F$ is injective.

Finally, $F$ is surjective. In order to prove this, we observe that if $w\in PH^{\bar\p}_n(u,v):=i_*\left(H_n\left(C^{\bar\p\preceq u}\right)\right)$ with the homomorphism $i_*:H_n\left(C^{\bar\p\preceq u}\right)\to H_n\left(C^{\bar\p\preceq v}\right)$  induced by the inclusion $i:C^{\bar\p\preceq u}\hookrightarrow C^{\bar\p\preceq v}$, then a chain $\hat c\in C_n^{\bar\p\preceq u}$ exists such that
$w=[\hat c]_v\in H_n\left(C^{\bar\p\preceq v}\right)$.
We have that $g^{-1}(\hat c)\in C_n^{\bar\p\circ g\preceq u}$ and $F\left([g^{-1}(\hat c)]_v\right)=[\hat c]_v=w$.

Therefore $F:PH^{\bar\p\circ g}_n(u,v)\to PH^{\bar\p}_n(u,v)$ is an isomorphism.
\end{proof}

The previous theorem justifies the name \emph{$G$-invariant persistent homology}, showing that
the PBNFs of a $G$-chain complex do not change if we replace the filtering function $\bar\p$ with the function $\bar\p\circ g$, for $g\in G$.

\section{Stability of the PBNFs with respect to $d_G$}
\label{stability}

In the previous section we have introduced some abstract definitions and a theorem. In this section we will show how we can obtain structures conforming to the previously described properties.

Let $X$ and $(S(X),\partial)$ be a triangulable space and its singular chain complex over a field $\K$, respectively.

Assume that a subgroup $G$ of the group $\Homeo(X)$ of all homeomorphisms $f:X\to X$ and a continuous function $\p=(\p_1,\ldots,\p_k):X\to \R^k$ are chosen. For every $u\in\R^k$, let us set $X^{\varphi\preceq u}:=\{x\in X:\varphi(x)\preceq u\}$.
Let us consider the action of $G$ on $S(X)$ defined by setting $g(\sigma):=g\circ \sigma$ for every $g\in G$ and every singular simplex $\sigma$ in $X$, and extending this action linearly on $S(X)$. We recall that, by definition,  every singular $n$-simplex in $X$ is a continuous function from the standard $n$-simplex
$\Delta_n$  into $X$.

Now, assume that a $G$-chain subcomplex $(\bar C,\partial)$ of the singular chain complex $(S(X),\partial)$ is given (we will show in the next section how this subcomplex can be constructed). We observe that, for every topological subspace $\bar X$ of $X$,  $(\bar C\cap S(\bar X),\partial)$ is a chain complex over the field $\K$. The symbol $\bar C\cap S(\bar X)$ denotes the chain complex $C'$ where $C'_n$ is the vector space of the singular $n$-chains in $\bar X$ that belong to $\bar C_n$.

In order to avoid ``wild'' chain complexes, we also make this assumption (see Remark~\ref{remstar} below):
\begin{description}
\item[$(*)$] If $X'$ and $X''$ are two closed subsets of $X$ with $X'\subseteq \interior(X'')$, then a topological subspace $\hat X$ of $X$ exists such that $X'\subseteq \hat X\subseteq X''$ and the homology group $H_n(\bar C\cap S(\hat X))$ is finitely generated
for every non-negative integer $n$.
\end{description}

Let us consider the set $\{\sigma_j^n\}_{j\in J}$ of all (distinct) singular $n$-simplexes in $X$. Obviously, if $X$ is not a finite topological space, $J$ will be an infinite (usually uncountable) set. Then we can endow the chain complex $\bar C$ with a filtering function $\bar\p$ in the following way. If $c$ equals the null chain in $\bar C_n$, we set $\bar\varphi(c):=(-\infty,\ldots,-\infty)$. If $c$ is a non-null singular $n$-chain, we can write $c=\sum_{r=1}^{m} a^r\sigma_{j_r}^n\in \bar C_n$
with $a^r\in \K$, $a^r\neq 0$ for every index $r$, and $j_{r'}\neq j_{r''}$ for $r'\neq r''$. This representation is said to be \emph{reduced}. 
In this case we set $\bar\varphi(c)=(u_1,\ldots,u_k)\in\R^k$, with each $u_i$ equal to the maximum of $\p_i$ on the union of the images of the singular simplexes $\sigma_{j_1}^n,\ldots,\sigma_{j_m}^n$.
In other words, $\bar\varphi(c)$ is the smallest vector $u$ such that the corresponding sublevel set $X^{\varphi\preceq u}$ contains the image of each singular simplex $\sigma_{j_r}^n$ involved in the reduced representation of $c$ that we have considered. We observe that this representation is unique up to permutations of its summands, so that $\bar\p$ is well defined.
Furthermore, the properties in Definition~\ref{defFF} are fulfilled. We shall say that the function $\bar \p$ is \emph{induced by $\p$}.

 An elementary introduction to singular homology can be found in \cite{Ha02}.

The next result has a key role in the rest of this paper and is analogous to the finiteness results proven in \cite{CeDFFe13} and \cite{CaLa11} for classical persistent homology.

\begin{prop}\label{tF}
For every $n\in \Z$ the $n$-th persistent Betti number function $\rho_n^{\bar \p}(u,v)$ of the $G$-chain complex $(\bar C,\partial)$, endowed with the filtering function $\bar\p$, is finite at each point $(u,v)$ in its domain.
\end{prop}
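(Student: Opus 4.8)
The plan is to reduce the finiteness of the persistent Betti number function $\rho_n^{\bar\p}(u,v)$ to the finite generation of a suitable homology group, and then invoke hypothesis $(*)$ to supply that finite generation. Fix $(u,v)\in\R^k\times\R^k$ with $u\prec v$. Recall that $\rho_n^{\bar\p}(u,v)$ is the rank of $PH_n^{\bar\p}(u,v)=i_*\!\left(H_n(\bar C\cap S(X^{\varphi\preceq u}))\right)\subseteq H_n(\bar C\cap S(X^{\varphi\preceq v}))$, so it suffices to bound this rank by the rank of some finitely generated group. The key observation I would make first is that, since $\bar\p$ is induced by $\p$, the sublevel subcomplex $\bar C^{\bar\p\preceq u}$ coincides with $\bar C\cap S(X^{\varphi\preceq u})$: a reduced chain has $\bar\p(c)\preceq u$ precisely when every singular simplex occurring in it has its image inside $X^{\varphi\preceq u}$. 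Thus the persistent homology at $(u,v)$ is exactly the image of the map on $n$-th homology induced by the inclusion $X^{\varphi\preceq u}\hookrightarrow X^{\varphi\preceq v}$, computed with the restricted chain complex $\bar C$.

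Next I would insert an intermediate space using $(*)$. Since $\p$ is continuous and $u\prec v$, the closed set $X'=X^{\varphi\preceq u}$ satisfies $X'\subseteq\interior(X^{\varphi\preceq v})=:\interior(X'')$ (the strict inequality $u\prec v$ is exactly what guarantees $X^{\varphi\preceq u}$ sits in the interior of $X^{\varphi\preceq v}$). Hypothesis $(*)$ then produces a subspace $\hat X$ with $X'\subseteq\hat X\subseteq X''$ such that $H_n(\bar C\cap S(\hat X))$ is finitely generated for every $n$. I would then factor the inclusion-induced map through $\hat X$: the inclusions $X^{\varphi\preceq u}\hookrightarrow\hat X\hookrightarrow X^{\varphi\preceq v}$ give, on homology,
\begin{equation*}
H_n(\bar C\cap S(X^{\varphi\preceq u}))\xrightarrow{\ \alpha\ }H_n(\bar C\cap S(\hat X))\xrightarrow{\ \beta\ }H_n(\bar C\cap S(X^{\varphi\preceq v})),
\end{equation*}
with $\beta\circ\alpha=i_*$. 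Consequently $\im i_*=\im(\beta\circ\alpha)\subseteq\im\beta$, and therefore
\begin{equation*}
\rho_n^{\bar\p}(u,v)=\rank\,\im i_*\le\rank\,\im\beta\le\rank H_n(\bar C\cap S(\hat X))<\infty.
\end{equation*}
This chain of inequalities gives the finiteness claimed.

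The main obstacle I expect is the verification, from the definition of $\bar\p$ as induced by $\p$, that $\bar C^{\bar\p\preceq u}$ is genuinely the same complex as $\bar C\cap S(X^{\varphi\preceq u})$, so that the abstract persistent homology of Definition \ref{defPHG} can be identified with an honest sublevel-set inclusion map; this requires care because $\bar\p$ is defined through reduced representations, and one must confirm that the condition $\bar\p(c)\preceq u$ is equivalent to every simplex of $c$ lying in $X^{\varphi\preceq u}$ rather than merely to some cancellation occurring. The second delicate point is checking that $u\prec v$ (strict inequality in every coordinate) indeed forces $X^{\varphi\preceq u}\subseteq\interior(X^{\varphi\preceq v})$, which is where continuity of $\p$ and the strictness built into Definition \ref{defPHG} are both used; everything after that is a routine functoriality-and-rank argument, with $(*)$ doing the real work of supplying a finitely generated group through which the map factors.
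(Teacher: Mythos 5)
Your proposal is correct and follows essentially the same route as the paper's proof: apply hypothesis $(*)$ with $X'=X^{\varphi\preceq u}$ and $X''=X^{\varphi\preceq v}$ (using $u\prec v$ and continuity of $\varphi$ to get $X'\subseteq\interior(X'')$), then factor the inclusion-induced map on homology through $\hat X$ and bound the rank of the image by $\dim H_n(\bar C\cap S(\hat X))<\infty$. The only difference is that you make explicit the identification $\bar C^{\bar\varphi\preceq u}=\bar C\cap S(X^{\varphi\preceq u})$, which the paper uses silently; this identification does hold, exactly because the reduced representation of a chain is unique up to permutation of summands, so no cancellation issue can arise.
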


\begin{proof}
Since $u\prec v$ and $\p$ is continuous, we have that the set $X^{\p\preceq u}$ is closed and contained in the interior of the closed set $X^{\p\preceq v}$.
Property $(*)$ implies that a topological subspace $\hat X$ of $X$ exists such that $X^{\p\preceq u}\subseteq \hat X\subseteq X^{\p\preceq v}$ and $H_n(\bar C\cap S(\hat X))$ is finitely generated.
The inclusions $\bar C\cap S(X^{\p\preceq u})\stackrel{i}{\hookrightarrow} \bar C\cap S(\hat X)\stackrel{j}{\hookrightarrow} \bar C\cap S(X^{\p\preceq v})$ induce the homomorphisms
$H_n(\bar C\cap S(X^{\p\preceq u}))\stackrel{i_*}{\to}
H_n(\bar C\cap S(\hat X))\stackrel{j_*}{\to}
H_n(\bar C\cap S(X^{\p\preceq v}))$. Since
$\dim\im\,(j_*\circ i_*)\leq\dim\im\,j_*\le \dim H_n(\bar C\cap S(\hat X))<\infty$, we obtain that also $PH^{\bar\p}_n(u,v):=j_*\circ i_*\left(H_n\left(\bar C\cap S(X^{\p\preceq u})\right)\right)$ is finitely generated.
\end{proof}

\begin{rem}\label{remstar}
We stress the importance of the assumption $(*)$.
It allows us to avoid chain complexes like the one
where the $0$-chains are all the usual singular $0$-chains
of $X$
and the only $1$-chain is the singular
zero $1$-chain of $X$.
Obviously, this is a $G$-chain complex for any subgroup $G$ of $\Homeo(X)$.
In this case, for any pair $(P_1,P_2)$ of distinct points of the topological space $X$, there is no singular $1$-chain whose boundary is the singular $0$-chain $P_2-P_1$ (here, for the sake of simplicity, we are not distinguishing the singular $0$-simplexes from their images in $X$).
Since the boundary homomorphism from $1$-chains to $0$-chains is zero, no non-zero $0$-chain is a boundary. Hence the homology group $H_0(\bar C)$ is not finitely generated, in general,  and the property~$(*)$ does not hold.
For example, it does not hold for $X'=X''=X$,
independently of the regularity of the space $X$ (unless $X$ is a finite set).
As a consequence, the proof that we gave for
Proposition~\ref{tF} does not work, and it is easy to check that its statement
is false for the chain complex we have just described. This is the reason for which the finiteness results proven in \cite{CeDFFe13} and \cite{CaLa11} for classical persistent homology cannot be directly applied to $G$-invariant persistent homology, without assuming property~$(*)$. Finally, we observe that  $(*)$ is not as much an assumption about the regularity of the topological space $X$, but rather an assumption about the regularity of the $G$-chain complex.
\end{rem}

From now on, in order to avoid technicalities that are not relevant in this paper,
we shall consider two PBNFs equivalent if they differ in a subset of their domain that has a vanishing measure.

A standard way of comparing two classical persistent Betti number functions is the matching distance $D_{match}$, a.k.a. bottleneck distance (cf. \cite{EdHa08,CeDFFe13}). It is important to observe that, in order to define it, we need the finiteness of the persistent Betti number functions (cf. \cite{CoEdHa07}).
This distance can be applied without any modification to the case of the persistent Betti number functions of the $G$-chain complex $\bar C$, because of the finiteness stated in Proposition~\ref{tF}.

The following theorem shows that the matching distance between persistent Betti number functions of the $G$-chain complex $\bar C$ is a lower bound for the natural pseudo-distance $d_G$. In other words, a small change of the filtering function with respect to $d_G$ produces just a small change of the corresponding persistent Betti number function with respect to $D_{match}$. This property allows the use of PBNFs in real applications, where the presence of noise is unavoidable.

\begin{thm}\label{tG}
For every $n\in \Z$, let us consider the $n$-th persistent Betti number functions $\rho_n^{\bar \p}$, $\rho_n^{\bar \psi}$ of the $G$-chain complex $(\bar C,\partial)$, endowed with the filtering functions $\bar\p$ and $\bar\psi$ induced by $\p:X\to \R^k$ and $\psi:X\to \R^k$, respectively. Then $D_{match}(\rho_n^{\bar \p},\rho_n^{\bar \psi})\le d_G(\p,\psi).$
\end{thm}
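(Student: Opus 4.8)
The plan is to reduce the statement to two ingredients: a sup-norm stability estimate for the matching distance of the filtered complex $\bar C$, and the $G$-invariance already established in Theorem~\ref{propinv}. First I would prove that for \emph{any} two continuous functions $\alpha,\beta\colon X\to\R^k$ inducing filtering functions $\bar\alpha,\bar\beta$ on $\bar C$ one has $D_{match}(\rho_n^{\bar\alpha},\rho_n^{\bar\beta})\le\max_{x\in X}\|\alpha(x)-\beta(x)\|_\infty$, and then exploit the freedom of replacing $\psi$ by $\psi\circ g$ without changing $\rho_n^{\bar\psi}$.

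For the sup-norm step I would first record the elementary but crucial identity $C^{\bar\alpha\preceq u}=\bar C\cap S(X^{\alpha\preceq u})$: by the very definition of the induced filtering function, $\bar\alpha(c)\preceq u$ holds exactly when every singular simplex appearing in the reduced representation of $c$ has image contained in $X^{\alpha\preceq u}$. Setting $\eps:=\max_{x\in X}\|\alpha(x)-\beta(x)\|_\infty$ and $\vec\eps:=(\eps,\ldots,\eps)$, the inequality $\alpha(x)\preceq u$ forces $\beta(x)\preceq u+\vec\eps$, whence $X^{\alpha\preceq u}\subseteq X^{\beta\preceq u+\vec\eps}$ and, intersecting with $\bar C$, $C^{\bar\alpha\preceq u}\subseteq C^{\bar\beta\preceq u+\vec\eps}$; the symmetric inclusion holds as well. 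These inclusions provide an $\eps$-interleaving of the two filtrations of $\bar C$ at the level of chain subcomplexes, hence of the associated persistence modules. Proposition~\ref{tF} guarantees that all the persistent Betti number functions involved are finite, so the classical algebraic stability argument for the (multidimensional) matching distance, which uses only the interleaving and finiteness and not the specific origin of the filtered complex, applies verbatim and yields the desired estimate.

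Second I would invoke the invariance. For $g\in G$, the function induced on $\bar C$ by $\psi\circ g$ coincides with $\bar\psi\circ g$: indeed, for a chain $c=\sum_r a^r\sigma_{j_r}^n$ the maximum of $\psi_i$ over $\bigcup_r g(\im\sigma_{j_r}^n)$ equals the maximum of $\psi_i\circ g$ over $\bigcup_r\im\sigma_{j_r}^n$, which is exactly $\overline{\psi\circ g}(c)$. Since each $g\in G$ is a chain isomorphism of $\bar C$, Theorem~\ref{propinv} gives $PH_n^{\overline{\psi\circ g}}(u,v)\cong PH_n^{\bar\psi}(u,v)$ and hence $\rho_n^{\overline{\psi\circ g}}=\rho_n^{\bar\psi}$. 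Applying the sup-norm estimate with $\alpha=\p$ and $\beta=\psi\circ g$ then gives $D_{match}(\rho_n^{\bar\p},\rho_n^{\bar\psi})=D_{match}(\rho_n^{\bar\p},\rho_n^{\overline{\psi\circ g}})\le\max_{x\in X}\|\p(x)-\psi(g(x))\|_\infty$, and taking the infimum over $g\in G$ produces exactly $d_G(\p,\psi)$.

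I expect the main obstacle to be the first step rather than the second. The invariance step is a direct bookkeeping consequence of Theorem~\ref{propinv} once the identity $\overline{\psi\circ g}=\bar\psi\circ g$ is checked. The delicate point is that the standard matching-distance stability theorems in the literature are stated for the full singular (or simplicial) complex of a space, whereas here the filtration lives on the $G$-invariant subcomplex $\bar C$; I would therefore have to make sure that the interleaving-to-matching-distance implication is phrased purely in terms of persistence modules with finite persistent Betti numbers, so that the finiteness furnished by Proposition~\ref{tF} is all that is needed to transport the classical argument to $\bar C$.
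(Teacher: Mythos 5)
Your proposal is correct and follows essentially the same route as the paper: the paper's proof simply instructs the reader to mimic the stability argument of \cite{CeDFFe13} step by step, replacing $\Homeo(X)$ by $G$ and the homology of each sublevel set $X^{\varphi\preceq u}$ by that of $\bar C\cap S(X^{\varphi\preceq u})$, with Proposition~\ref{tF} supplying the finiteness needed to make sense of $D_{match}$. Your two-step decomposition---sup-norm stability obtained from the identity $C^{\bar\alpha\preceq u}=\bar C\cap S(X^{\alpha\preceq u})$ and the resulting interleaving, followed by the invariance step via Theorem~\ref{propinv} and the equality of $\bar\psi\circ g$ with the filtering function induced by $\psi\circ g$---is precisely the structure of that mimicked argument, written out explicitly rather than delegated.
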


\begin{proof}
We can proceed by mimicking step by step the proof of stability for ordinary persistent Betti number functions (cf. \cite{CeDFFe13}).
This is possible because that proof depends only on properties of PBNFs that are shared by both classical persistent Betti number functions and persistent Betti number functions of a $G$-chain complex endowed with a filtering function, once we have proven that the PBNFs are finite
(Proposition~\ref{tF}).
It is sufficient to replace the group $\Homeo(X)$ with the group $G\subseteq \Homeo(X)$, and the homology groups of each sublevel set $X^{\varphi\preceq u}$ with the homology groups of the $G$-chain complex $\bar C\cap S(X^{\varphi\preceq u})$. Since the only difference in the proof consists in the need to show that $G$-invariant persistent Betti number functions are finite in order to be allowed to use the matching distance $D_{match}$, we refer the reader interested in the technical details to \cite{CeDFFe13}.
\end{proof}

\section{Applications}\label{Applications}

\subsection{A first application of our method}\label{application}
In this subsection we illustrate how $G$-invariant persistent homology can be used to discriminate between the rooms described in Example~\ref{ex1}, showing that no rotation of $S^1$ changes the function $\varphi$ into $\psi$.

In order to manage this problem we can consider the chain complex $\bar C$ whose $n$-chains are all the singular $n$-chains $c\in S_n(S^1)$ for which the following property holds:
\begin{description}
\item[$(P)$] If a singular simplex $\sigma_i^n$ appears in a reduced representation of $c$ with respect to the basis $\{\sigma_j^n\}$ of $S_n(S^1)$, then the antipodal simplex $s\circ \sigma_i^n$ appears in that representation with the same multiplicity of $\sigma_i^n$, where $s$ is the antipodal map $s:S^1 \to S^1$.
\end{description}
In other words, in $\bar C$ we accept by definition only the singular chains in $S^1$ that can be written in the form $\sum_{r=1}^{m} a^r\left(\sigma_{j_r}^n+s\circ \sigma_{j_r}^n\right)$. 
It easy to check that $(\bar C,\partial)$ is a $R(S^1)$-chain subcomplex of the complex $(S(S^1),\partial)$.

  Every rotation $\rho\in R(S^1)$ commutes with the antipodal map $s$ and is a chain isomorphism from $\bar C$ to $\bar C$. Moreover, it is easy to verify that the properties in Definition
\ref{defGICC} are fulfilled, for $G=R(S^1)$ and $C=\bar C$. The chains in $\bar C$ will be called \emph{symmetric chains}.

We can prove that the property $(*)$ holds for the $R(S^1)$-chain complex that we have defined.
Let $X'$ and $X''$ be two closed subsets of $S^1$ with $X'\subseteq \interior(X'')$.
Let us set $\hat X$ equal to the $\varepsilon$-dilation\footnote{The $\varepsilon$-dilation of a subset $Y$ of a metric space $M$ is the set of points of $M$ that have a distance strictly less than $\varepsilon$ from $Y$. On $S^1\subset \R^2$ we consider the metric induced by the Euclidean metric in $\R^2$.} of $X'$ in $S^1$, choosing $\varepsilon>0$ so small that the $\hat X \subseteq \interior(X'')$.
We observe that the set $\hat X\cap s(\hat X)$ is open and $s\left(\hat X\cap s(\hat X)\right)=\hat X\cap s(\hat X)$.
Moreover, $\hat X\cap s(\hat X)$ is the union of a finite family $\mathcal{F}=\{\alpha_i\}$ of pairwise disjoint open arcs, having the property that if $\alpha_i\in \mathcal{F}$ then also $s(\alpha_i)\in \mathcal{F}$ (possibly, $\mathcal{F}=\{S^1\}$).
Now, let us consider the topological quotient space $Q$ obtained by taking all unordered pairs of antipodal points in $\hat X\cap s(\hat X)$.
We have that $Q$ is homeomorphic to the union of a finite family $\mathcal{F}'$ of pairwise disjoint  open arcs of $S^1$ (possibly, $\mathcal{F}'=\{S^1\}$), and hence the $n$-th homology group $H_n(Q)$ is finitely generated.
A chain isomorphism $F$ from $\bar C\cap S\left(\hat X\cap s(\hat X)\right)$ to $S(Q)$ exists, taking each $n$-chain $\sigma+s\circ\sigma$ to the chain given by the singular simplex $\tilde \sigma:\Delta_n\to Q$, defined by setting $\tilde\sigma(p):=\{\sigma(p),s\circ\sigma(p)\}$ for every $p\in \Delta_n$.
$F$ induces an isomorphism
from $H_n\left(\bar C\cap S\left(\hat X\cap s(\hat X)\right)\right)$ to $H_n(Q)$.
Therefore also $H_n\left(\bar C\cap S\left(\hat X\cap s(\hat X)\right)\right)$ is finitely generated. Property $(*)$ follows by observing that $\bar C\cap S\left(\hat X\cap s(\hat X)\right)=\bar C\cap S(\hat X)$.

Referring to Example~\ref{ex1}, let us consider the birth of the first homology class in the homology groups $H_0\left(\bar C^{\bar\varphi\le t}\right)$ and $H_0\left(\bar C^{\bar \psi\le t}\right)$, respectively, when the parameter $t$ increases. While the group $H_0\left(\bar C^{\bar \varphi\le t}\right)$ becomes non-trivial when $t$ reaches the value $t_0=\min\p=\min\psi$, the group $H_0\left(\bar C^{\bar \psi\le t}\right)$ becomes non-trivial when $t$ reaches a value $\bar t>\min\p=\min\psi$. This is due to the fact that the sublevel set $\{x\in S^1:\varphi(x)\le t_0\}$ contains two pairs of antipodal points, while the sublevel set $\{x\in S^1:\psi(x)\le t_0\}$ contains no pair of antipodal points (see Figure~\ref{ap}). In other words, the only points at infinity in the persistence diagrams associated with the $0$-th persistent homology groups of the $G$-chain subcomplex $\bar C$ of $S(S^1)$ with respect to $\bar \varphi$ and $\bar\psi$ are $(t_0,\infty)$ and $(\bar t,\infty)$, respectively.

It follows that the matching distance between the $0$-th persistent Betti number functions of the $R(S^1)$-chain complex $\bar C$ with respect to the filtering functions $\bar \p$ and $\bar \psi$ is at least $\bar t-t_0>0$. By applying Theorem~\ref{tG}, we obtain the inequality $d_{R(S^1)}(\p, \psi)\ge \bar t-t_0$.
In other words,  $G$-invariant persistent homology gives a non-trivial lower bound for $d_{R(S^1)}(\p, \psi)$, while the matching distance between the classical persistent Betti number functions with respect to the filtering functions $\p$ and $\psi$ vanishes. 

The interested reader can find the $0$-th persistent Betti number functions $\rho_0^{\bar\varphi}$ and $\rho_0^{\bar\psi}$ of the $R(S^1)$-chain complex $\bar C$ in Figure~\ref{PBNF_degree_0}. We notice that the persistent Betti number functions $\rho_1^{\bar\varphi}$ and $\rho_1^{\bar\psi}$ of the $R(S^1)$-chain complex $\bar C$ coincide. Indeed, $\varphi$ and $\psi$ take the same absolute maximum $\tilde t$. Hence both the groups $H_1\left(\bar C^{\bar \varphi\le t}\right)$ and $H_1\left(\bar C^{\bar \psi\le t}\right)$ becomes non-trivial (and equal to $\K$) when $t$ reaches the same value $\tilde t=\max\varphi=\max\psi$. After that change, no further change happens. As a consequence, the persistent Betti number functions in degree $1$ of the $R(S^1)$-chain complex $\bar C$ with respect to the filtering functions $\bar \p$ and $\bar \psi$ coincide.

\begin{figure}[htbp]
\begin{center}
\includegraphics[width=16cm]{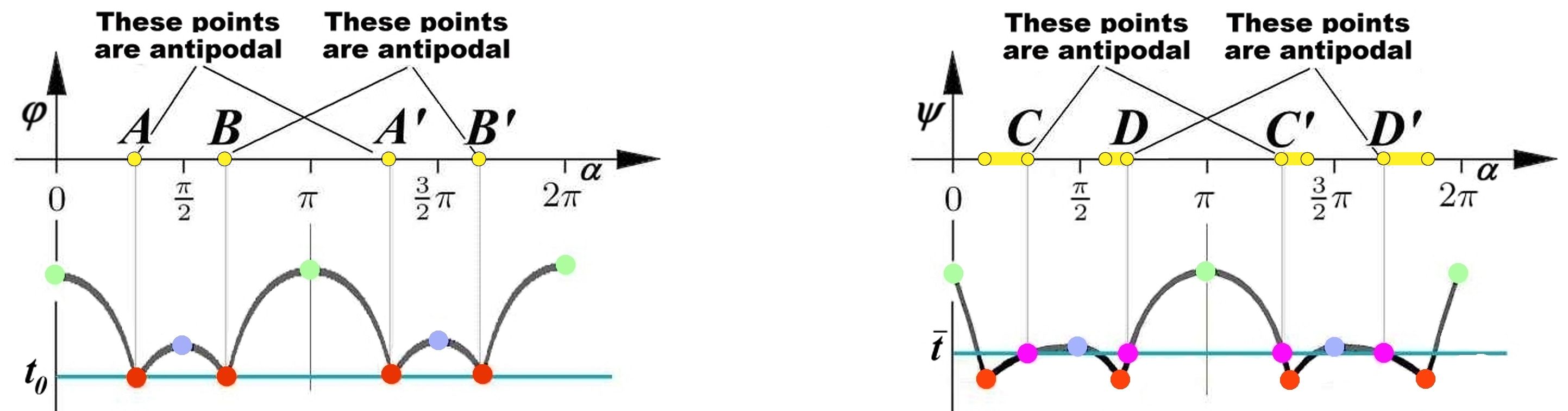}
\caption{The sublevel sets of the filtering functions  $\varphi,\psi$ cited in Example~\ref{ex1},
respectively
for the levels $t_0$ and $\bar t$.}
\label{ap}
\end{center}
\end{figure}

\begin{figure}[htbp]
\begin{center}
\includegraphics[width=16cm]{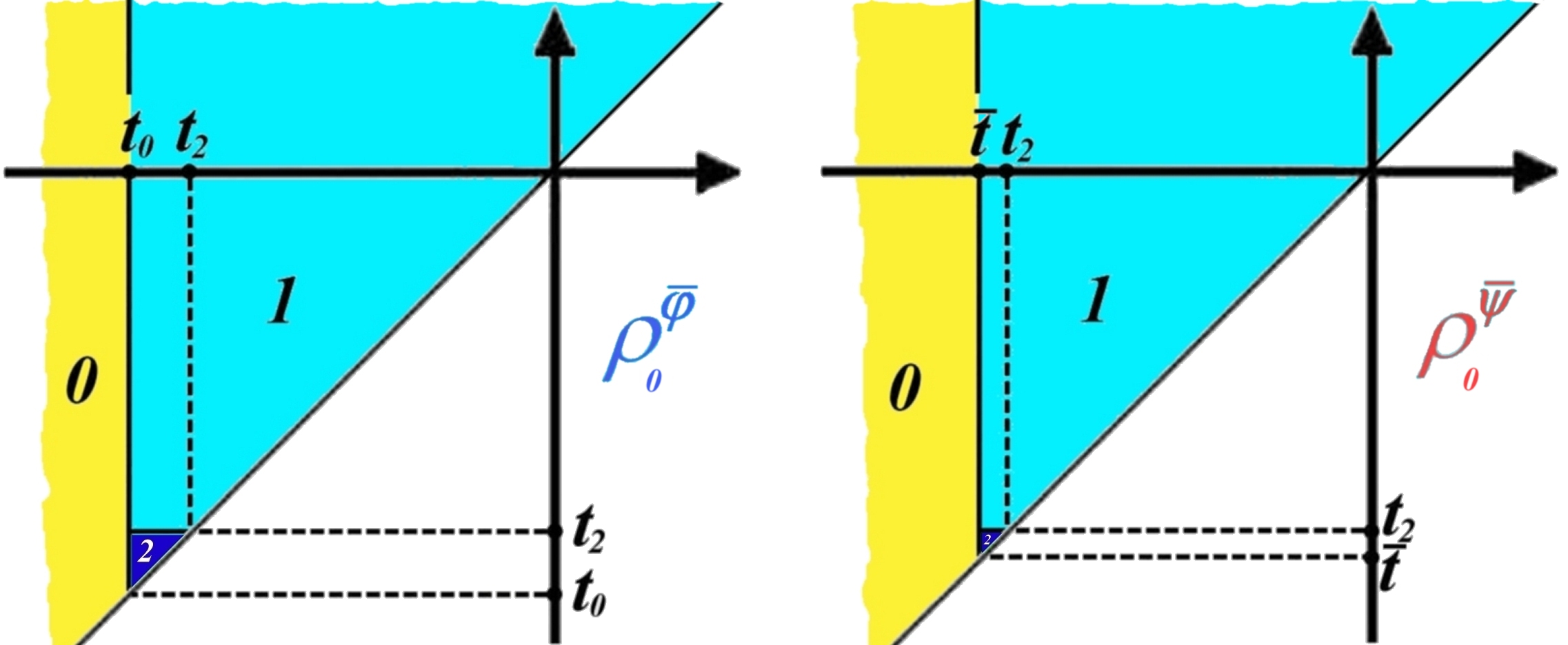}
\caption{The $0$-th persistent Betti number functions $\rho_0^{\bar\varphi}$ and $\rho_0^{\bar\psi}$ of the $R(S^1)$-chain complex $\bar C$, corresponding to the filtering functions  $\varphi,\psi$ cited in Example~\ref{ex1}. In each part of the domain, the value taken by the PBNF is displayed. Observe that in both figures a small triangle is present, at which the persistent Betti number function takes the value $2$.}
\label{PBNF_degree_0}
\end{center}
\end{figure}

\begin{rem}\label{rem2} As an alternative approach to the problem of comparing two filtering functions $\p,\psi:X\to \R$, the reader could think of using the well known concept of equivariant homology (cf.  \cite{Wi75}). In other words, in the case that $G$ acts freely on $X$, one could think of considering the topological quotient space
$X/G$, endowed with the filtering functions $\hat \p,\hat \psi$ that take each orbit $\omega$ of the group $G$ to the maximum of $\p$ and $\psi$ on $\omega$, respectively.
We observe that this approach would not be of help in the case illustrated in Example~\ref{ex1}, since the quotient of $S^1/R(S^1)$ is just a singleton. As a consequence, if we considered two filtering functions $\p,\psi:S^1\to\R$ with $\max\p=\max\psi$, the persistent homology of the induced functions $\hat\p,\hat\psi:S^1/R(S^1)\to \R$ would be the same. For more details about $G$-complexes and equivariant homology we refer the interested reader to \cite{Br72,Il73,tD87}.
\end{rem}

\subsection{A generalization of our technique}\label{generalization}
The approach that we have illustrated in the previous subsection can be generalized to triangulable spaces different from $S^1$ and invariance groups $G$ that are different from the group of rotations.  The main idea consists in looking for another subgroup $H$ of $\Homeo(X)$ such that
   \begin{enumerate}
\item  \label{a} $H$ is finite (i.e. $H=\{h_1,\ldots,h_r\}$);
\item \label{b} $g\circ h\circ g^{-1}\in H$ for every $g\in G$ and every $h\in H$.
\end{enumerate}

Due to the finiteness of $H$, the property~\ref{b} implies that the restriction to $H$ of the conjugacy action of each $g\in G$ is a permutation of $H$.

The legitimate $n$-chains in our chain complex $\bar C$ are defined to be the linear combinations of ``elementary'' singular chains $c$ that can be written as $c=\sum_{i=1}^r h_i\circ \sigma$, where $\sigma:\Delta_n\to X$ is a singular $n$-simplex in $X$. Because of the property~\ref{b} and the linearity of the action of each $g\in G$,
$g\left(\sum_{i=1}^r h_i\circ \sigma\right)=\sum_{i=1}^r g\circ h_i\circ \sigma=\sum_{i=1}^r (g\circ h_i\circ g^{-1})\circ (g\circ \sigma)=\sum_{i=1}^r h_i\circ (g\circ \sigma)$ is another legitimate chain in our chain complex  $\bar C$, so that $\bar C$ results to be a $G$-chain complex.
In Example~\ref{ex1}, we have chosen $H=\{id,s\}\subset G=R(S^1)$, where $s$ is the antipodal simmetry. We recall that the filtering function $\varphi:X\to\R^k$ induces a filtering function $\bar\varphi=(\bar\varphi_1,\ldots,\bar\varphi_k)$ on the set of legitimate chains, where $\bar\varphi(c)$ is the smallest vector $u$ such that the corresponding sublevel set $X^{\varphi\preceq u}$ contains the image of each singular simplex involved in a reduced representation of $c$, for every non-null chain $c\in\bar C_n$.

If $G$ is Abelian, a simple way of getting a subgroup $H$ of $\Homeo(X)$ verifying the properties~\ref{a} and \ref{b} consists in setting $H$ equal to a finite subgroup of $G$. This is exactly what we did in Example~\ref{ex1},  setting  $H=\{id,s\}\subset G=R(S^1)$.

If $G$ is finite, a trivial way of getting a subgroup $H$ of $\Homeo(X)$ verifying the properties~\ref{a} and \ref{b} consists in setting $H=G$. This choice leads to consider the quotient space $X/G$, provided that $G$ acts freely on $X$.

However, we stress the fact that our approach is far more general.
Indeed, in both Examples~\ref{ex1} and \ref{ex2}, if we set $G$ equal to the (Abelian and finite) group generated by the reflections with respect to the coordinate axes, we could choose $H$  equal to the group generated by the counterclockwise rotation of $2\pi/m$ radians (where $m$ denotes a fixed natural number greater than $2$). It is interesting to observe that in this case, if the homeomorphism $g$ reverses the orientation, then the conjugacy action $h\mapsto g\circ h\circ g^{-1}$ is not the identity, since it takes each homeomorphism $h$ to its inverse $h^{-1}$. Furthermore, $H\not\subseteq G$.


\begin{ex}\label{ex3}
On the basis of the remarks that we have made in this subsection, we can give another example concerning our adaptation of persistent homology to invariance groups. Let us consider $S^2=\{(x_1,x_2,x_3)\in\R^3:x_1^2+x_2^2+x_3^2=1\}$, and the two sets $X_+:=\{(x_1,x_2,x_3,x_4)\in\R^4:x_1^2+x_2^2+x_3^2=1,x_4=1\}$, $X_-:=\{(x_1,x_2,x_3,x_4)\in\R^4:x_1^2+x_2^2+x_3^2=1,x_4=-1\}$. Let us consider also the topological space $\bar X=X_+\cup X_-$, with the topology (and the metric) induced by its embedding in $\R^4$. From the topological point of view, $\bar X$ is the disjoint union of two copies of $S^2$.

Let $\bar G$ be the group of all isometries $g:\bar X\to \bar X$ that can be represented (with a little abuse of notation) as $g(x_1,x_2,x_3,x_4)=\left(\tilde g(x_1,x_2,x_3),x_4\right)$ for every $(x_1,x_2,x_3,x_4)\in \bar X$, where $\tilde g$ is an isometry of $S^2$. In plain words, these are the isometries that act similarly on $X_+$ and $X_-$. Assume that we are interested in the comparison of continuous functions from $\bar X$ to $\R$ with respect to the group $\bar G$.
In order to proceed, we have to choose a group $\bar H$ verifying the properties 1 and 2 in this subsection. For instance, we can set $\bar H$ 
equal to the group $\{id,{\bar s}\}$, generated by the map ${\bar s}:\bar X\to \bar X$ defined by setting ${\bar s}(x_1,x_2,x_3,x_4)=(x_1,x_2,x_3,-x_4)$ for every $(x_1,x_2,x_3,x_4)\in \bar X$. Following the procedure illustrated in this subsection, we obtain a $\bar G$-chain complex $\bar C$. By definition, in $\bar C$ we accept only chains that can be written in the form $\sum_{r=1}^{m} a^r\left(\sigma_{j_r}^n+{\bar s}\circ \sigma_{j_r}^n\right)$, with respect to the basis $\{\sigma_j^n\}$ of $S_n(\bar X)$.

In other words, since $\bar H$ acts freely on $\bar X$, the $n$-chains in $\bar C$ are the singular $n$-chains $c\in S_n(\bar X)$ for which the following property holds:
\begin{description}
\item[$(P')$] If a singular simplex $\sigma_i^n$ appears in a reduced representation of $c$ with respect to the basis $\{\sigma_j^n\}$ of $S_n(\bar X)$, then the simplex ${\bar s}\circ \sigma_i^n$ appears in that representation with the same multiplicity of $\sigma_i^n$.
\end{description} 

Every $g\in \bar G$ commutes with the map ${\bar s}$ and is a chain isomorphism from $\bar C$ to $\bar C$. Moreover, it is easy to verify that the properties in Definition
\ref{defGICC} are fulfilled, for $C=\bar C$.

Now, we want to prove that $\bar C$ satisfies 
the property $(*)$ described in Section~\ref{stability}.
Let us consider a sufficiently small $\epsilon >0$ such that a finite cover $\{B_1,\ldots,B_l\}$ of $X'$ exists, where each $B_i$ is a closed ball of radius $\epsilon$, contained in $X''$. We want to prove that the compact set $\hat X:=\bigcup_{i=1}^lB_i$ verifies the statement described in the property $(*)$. 

First of all, we observe that an elementary $n$-chain $c=\sigma+\bar s\circ \sigma \in \bar C$ belongs to $S(\hat X)$ if and only if $\sigma$ is a singular $n$-simplex in $\hat Y:=\hat X\cap \bar s(\hat X)=\left(\bigcup_{i=1}^lB_i\right)\bigcap \bar s\left(\bigcup_{i=1}^lB_i\right)$. Therefore, $S(\hat X)=S(\hat Y)$. We notice that $\bar s(\hat Y)=\hat Y$, so that $\bar H$ induces an action on $\hat Y$. 

Let us consider the map $F$ that takes each elementary $n$-chain $c=\sigma+\bar s\circ \sigma\in\bar C$ to the singular $n$-simplex $\tilde \sigma:\Delta_n\to \bar X/\bar H$, defined by setting $\tilde\sigma(p):=\{\sigma(p),\bar s\circ\sigma(p)\}$ for every $p\in \Delta_n$. Since $\bar H$ acts freely on $\bar X$, every $n$-chain in $\bar C$ admits a unique representation as a linear combination of distinct elementary singular chains, where each elementary chain $c$ can be written as $c=\sigma+\bar s\circ \sigma$. Therefore, $F$ extends to a unique chain map $\bar F:\bar C\to S(\bar X/\bar H)$. It is easy to check that $\bar F$ is a chain isomorphism and that 
$F(\bar C\cap S(\hat Y))=S(\hat Y/\bar H)$.
Therefore,  the homology group $H_n(\bar C\cap S(\hat X))=H_n(\bar C\cap S(\hat Y))$ is isomorphic to the homology group $H_n(S(\hat Y/\bar H))$. 

A homeomorphism $f:
\bar X/\bar H\to S^2$ exists, which takes each orbit $\{(x_1,x_2,x_3,1),(x_1,x_2,x_3,-1)\}\in \bar X/\bar H$ to the point 
$(x_1,x_2,x_3)\in S^2$. The homeomorphism $f$ takes $\hat Y/\bar H$ onto the intersection of two finite unions of balls of radius $\epsilon$ in $S^2$, hence the homology group $H_n(S(\hat Y/\bar H))$ is finitely generated. This proves that $\bar C$ verifies the property $(*)$.

Let now $\varphi:\bar X\to \R$ be a filtering function on $\bar X$. The quotient space $\bar X/\bar H$ is naturally endowed with a filtering function $\varphi^*:\bar X/\bar H\to\R$ defined by setting $\varphi^* (\omega)=\max_{x\in\omega}\varphi(x)$ for every orbit $\omega$ of $\bar H$. A filtering function $\bar\varphi^*:S(\bar X/\bar H)\to\R$ can be defined by setting $\bar\varphi^*(c)$ equal to the smallest $u\in\R$ such that the corresponding sublevel set ${\p^*}^{-1}\left((-\infty,u]\right)$ contains the image of each singular simplex involved in the reduced representation of $c\in S(\bar X/\bar H)$, provided that $c$ is non-null. We observe that $\bar \varphi=\bar\varphi^*\circ \bar F$, where $\bar\varphi:\bar C\to\R$ is the filtering function induced by $\varphi$ (Section~\ref{stability}). 

Let us consider the filtering function $\p':S^{2}\to\R$ that we can obtain by setting $\p'(x_1,x_2,x_3):=\max\{\p(x_1,x_2,x_3,1),\p(x_1,x_2,x_3,-1)\}$  for every $(x_1,x_2,x_3)\in S^2$.
Since $\varphi^*=\varphi'\circ f$, by using the chain isomorphism $\bar F$ it is easy to check that 
the persistent Betti number function $\rho_n^{\bar\p}$ equals 
the classical persistent Betti number function $\rho_n^{\p'}$.


We will show that $\bar G$-invariant persistent homology can be used to discriminate between two functions $\varphi,\psi:\bar X\to\R$ that cannot be distinguished by directly applying classical persistent homology. 
In order to illustrate this fact, let us consider the functions $\varphi,\psi:\bar X\to\R$ defined by setting 
$\varphi(x_1,x_2,x_3,x_4)=x_3$ and 
$$
\psi(x_1,x_2,x_3,x_4)= \left\{ \begin{array}{rl}
x_3 &\mbox{ if $x_4=1$} \\
-x_3 &\mbox{ if $x_4=-1$}
       \end{array} \right.
$$
for every $(x_1,x_2,x_3,x_4)\in \bar X$.

It is easy to check that a homeomorphism $g:\bar X\to \bar X$ exists, such that $\psi=\p\circ g$. In other words, $d_{\Homeo(\bar X)}(\p,\psi)=0$,
$\p$ and $\psi$ have the same persistent Betti number functions, and the direct application of classical persistent homology does not give a positive lower bound for $d_{\bar G}(\p,\psi)$. However, the persistent Betti number functions of the $\bar G$-chain complex $\bar C$ with respect to the induced filtering functions $\bar \p$ and $\bar \psi$ \emph{do not} coincide. This can be seen by computing the $\bar G$-invariant persistent homology in degree $1$. Indeed, 
 while the group $H_1\left(\bar C^{\bar \varphi\le t}\right)$ is trivial for every $t\in \R$, $H_1\left(\bar C^{\bar \psi\le t}\right)$ is isomorphic to $\K$ for $0\le t<1$.
This follows from the computation of the classical persistent Betti number functions $\rho_n^{\p'}$ and $\rho_n^{\psi'}$, where 
$\p'(x_1,x_2,x_3):=\max\{\p(x_1,x_2,x_3,1),\p(x_1,x_2,x_3,-1)\}=x_3$ and $\psi'(x_1,x_2,x_3):=\max\{\psi(x_1,x_2,x_3,1),\psi(x_1,x_2,x_3,-1)\}=|x_3|$, respectively,  for every $(x_1,x_2,x_3)\in S^2$.

On one hand, the persistence diagram associated with the persistent homology group in degree $1$ of the 
$\bar G$-chain complex $\bar C$ with respect to the filtering function $\bar \p$ is trivial. On the other hand, the persistence diagram associated with the persistent homology group in degree $1$ of the 
$\bar G$-chain complex $\bar C$ with respect to the filtering function $\bar \psi$ contains just the point $(0,1)$, with multiplicity $1$ (apart from the trivial points on the line $\{(u,v)\in\R^2:u=v\}$). 

It follows that the matching distance between the persistent Betti number functions in degree $1$ of the $\bar G$-chain complex $\bar C$ with respect to the filtering functions $\bar \p$ and $\bar \psi$ is at least the max-distance between the point $(0,1)$ and the line $\{(u,v)\in\R^2:u=v\}$, i.e. $1/2$. By applying Theorem~\ref{tG}, it follows that $d_{\bar G}(\p, \psi)\ge 1/2$, while both $d_{\Homeo(\bar X)}(\p, \psi)$
and the matching distance between the classical persistent Betti number functions of $\varphi$ and $\psi$ vanish. 

\end{ex}

\section{Discussion and further research}

Our method is grounded in the use of singular homology, in order to simplify the theoretical treatment. This fact constitutes a problem from the computational point of view. Actually, the use of simplicial homology would make our approach much more suitable for applications. 

The attempt of using simplicial homology in our framework leads to the need for a  triangulation of the topological space $X$ that is sufficiently fine, and invariant under the action of the group $H$ described in Subsection~\ref{generalization}, provided that $H$ acts freely on $X$. If such a triangulation is available, we can replace the previously considered singular chains that are left fixed under the action of $H$ with simplicial chains that are left fixed under the action of $H$. In other words, we can compute the simplicial homology of the quotient space $X/H$ via the quotient triangulation induced by the triangulation of $X$. In several application this is not difficult to do, since the space $X$ is fixed, and the search for an $H$-invariant triangulation
can be worth the effort. We also underline that, according to Subsection~\ref{generalization}, the group $H$ is finite. This fact makes the construction of an $H$-invariant triangulation much more affordable.

For example, if we are interested in comparing real-valued functions defined on $S^1$ with respect to the group of rotations (cf. Example~\ref{ex1}), it is quite easy to find a triangulation of $S^1$ that is invariant under the action of the group generated by the central symmetry. 
If we are interested in comparing real-valued functions defined on the topological space $\bar X$ with respect to the group $\bar G$ (see Example~\ref{ex3}), it is quite easy to find a triangulation of $\bar X$ that is invariant under the action of the group $\bar H$. 
We highlight that these triangulations \emph{do not} depend on the filtering functions and have to be computed just once.

Actually, in several applications where each filtering function is the result of a measurement, just one topological space $X$ is involved, and hence only one invariant triangulation is required to apply our method. 
For instance, we can refer to the problem of comparing the shapes of objects represented by clouds of points belonging to a fixed compact subset $B$ of a Euclidean space $\E^n$. In this case we can set $X=B$, while each filtering function $\varphi:B\to\R^k$ can describe both the  distance from the given cloud and other properties (cf., e.g., \cite{FrLa13}). Also in this case, just an invariant triangulation of $B$ is required.

We conclude this section by sketching a possible approach to the case that no $H$-invariant triangulation of $X$ is available, under the assumption that $H$ acts freely on $X$. For the sake of simplicity, we also assume that our filtering functions are real-valued. 

It is not restrictive to assume that $X$ is the body $|\Gamma|$ of a complex $\Gamma$ realized in $\R^n$.  
By possibly applying some barycentric subdivisions to $\Gamma$, we can also assume that its simplexes 
have diameters less than a given $\delta>0$. In general, the complex $\Gamma$ will not be invariant under the action of the group $H$.

Let us fix an $\epsilon>0$. For every $n\in \mathbb{N}$, let us consider the set of all ``elementary'' singular $n$-chains $c$  for which the following property holds: 
an ordered $r$-tuple $(\sigma_1,\ldots,\sigma_r)$ of singular $n$-simplexes belonging to $S(X)$ exists, such that $c=\sum_{i=1}^{r} \sigma_i$ and $\max_{p\in\Delta_n}\|h_i\circ\sigma_1(p)-\sigma_i(p)\|_\infty\le\epsilon$
for $1\le i\le r$.
We say that  these chains are \emph{almost symmetric} with respect to the group $H=\{h_1,\ldots,h_r\}$. We define $\bar C_n^\epsilon(X)$ to be the set of all linear combinations of these elementary almost symmetric singular $n$-chains. In this way we obtain a chain complex $\bar C^\epsilon(X)$. 
Let us consider the filtering function $\tilde \varphi:\bar C^\epsilon(X)\to\R$ that takes each non-null $n$-chain $c$ to the minimum value $t$ such that $c$ admits a representation $c=\sum_{j=1}^{m} a^j\sigma_{j}$ in $S_n(X)$, with $\sigma_j(\Delta_n)\subseteq X^{\varphi\le  t}$ for $1\le j\le m$. 
 
Moreover, for every $n\in \mathbb{N}$, let us consider the set of all ``elementary'' simplicial $n$-chains $c'$  for which the following property holds: 
an ordered $r$-tuple $(\sigma_1,\ldots,\sigma_r)$ of \emph{linear} singular $n$-simplexes exists, such that $c'=\sum_{i=1}^{r} \tau_i$ with $\tau_i\in \Gamma$, $|\tau_i|=\sigma_i(\Delta_n)$ and $\max_{p\in\Delta_n}\|h_i\circ\sigma_1(p)-\sigma_i(p)\|_\infty\le\epsilon$
for $1\le i\le r$.
Once again, we say that  these chains are \emph{almost symmetric} with respect to the group $H=\{h_1,\ldots,h_r\}$. We define $C_n^\epsilon$ to be the set of all linear combinations of these elementary almost symmetric simplicial $n$-chains. In this way we obtain a chain complex $C^\epsilon$. 
Let us consider on $C^\epsilon$ the filtering function $\hat \varphi$ that takes each non-null chain $c'$ to the minimum value $t$ such that $c'$ admits a representation $c'=\sum_{j=1}^{m} a^j\tau_{j}$ in the simplicial chain complex of $\Gamma$, with $|\tau_j|\subseteq X^{\varphi\le  t}$ for $1\le j\le m$. 

We notice that $\bar C^\epsilon$ and $C^\epsilon$ are chain complexes but not $G$-chain complexes, in general. 

The proof that a continuous non-negative function $\eta:\R^2\to \R$ exists such that $\eta(0,0)=0$ and the persistence modules of $\tilde\varphi$ and $\hat \varphi$ are $\eta(\delta,\epsilon)$-interleaved (cf. \cite{ChCo*09}) would imply that the persistent Betti number functions of the chain complex 
$C^\epsilon$ with respect to the filtering function $\hat \varphi$ are close to the persistent Betti number functions of the chain complex $\bar C^\epsilon$ with respect to the filtering function $\tilde \varphi$. This proof could be based on the simplicial approximation theorem.

Under suitable assumptions, it should then be possible to retrieve the persistent Betti number functions of the $G$-chain complex $\bar C$ (described at the beginning of  Subsection~\ref{generalization}) with respect to the filtering function $\bar \varphi$,
as the limit of the persistent Betti number functions of the chain complex $\bar C^\epsilon$ with respect to the filtering function $\tilde \varphi$, for $\delta$ and $\epsilon$ going to $0$. 
Ultimately, the persistent Betti number functions of the simplicial chain complex 
$C^\epsilon$  with respect to the filtering function $\hat \varphi$ should be a good approximation of the persistent Betti number functions of the $G$-chain complex $\bar C$ with respect to the filtering function $\bar \varphi$.

However, we think that this line of research is not trivial and deserves a separate and detailed treatment.  

Another interesting topic could be the one concerning the choice of the operator that takes each filtering function $\varphi$ on the topological space $X$ to the filtering function $\bar \varphi$ on the $G$-chain complex $\bar C$ (Section~\ref{stability}).
In the proof of Theorem~\ref{tG} we use the fact that this operator verifies the inequality $\sup_{c\in \bar C}\|\bar \varphi(c)-\bar \psi(c)\|_\infty\le\max_{x\in X}\|\varphi(x)-\psi(x)\|_\infty$. Other operators taking filtering functions on $X$ to filtering functions on $\bar C$ could be used in our method, provided that they verify the same inequality. 
For example, we could use the operator that takes each filtering function $\varphi$ on $X$ to the filtering function $\varphi^\sharp$ on $\bar C$ defined by setting $\varphi^\sharp(c):=\frac{1}{r}\sum_{1\le i\le r}\bar\varphi(h_i\circ \sigma)$, for every elementary singular chain $c=\sum_{i=1}^r h_i\circ \sigma$, provided that $G$ acts freely on $X$.

Finally, it would be also interesting to determine if other techniques to construct filtered $G$-chain complexes exist which are essentially different from the one described 
in Subsection~\ref{generalization}.

In conclusion, the general method that we have sketched in this article probably requires a great amount of further research, from the algebraic, homological and computational point of view. 

We postpone the treatment of these issues to subsequent papers.

\section*{Acknowledgment}
The author thanks Silvia Biasotti, Fr\'ed\'eric Chazal, Herbert Edelsbrunner, Massimo Ferri, Grzegorz Jab\l o\'nski, Claudia Landi, Michael Lesnick, Marian Mrozek, Michele Mulazzani and an anonymous referee for their suggestions and advice, and the Leibniz Center for Informatics in Dagstuhl for its inspiring hospitality. 

A special thank to Peter Landweber, for his invaluable help. 

The research described in this  article has been partially supported by GNSAGA-INdAM (Italy), and is based on the work realized by the author within the ESF-PESC Networking Programme ``Applied and Computational Algebraic Topology''.

This paper is dedicated to the beloved memory of Don~Renato~Gargini.

\bibliographystyle{model1-num-names}

\end{document}